\newtheorem{theorem}{Theorem}[section]
\newtheorem{proposition}[theorem]{Proposition}
\newtheorem{lemma}[theorem]{Lemma}
\theoremstyle{definition}
\newtheorem{definition}[theorem]{Definition}
\theoremstyle{remark}
\newtheorem{remark}[theorem]{Remark}
\numberwithin{equation}{section}
\renewcommand{\S}{\mathfrak{S}}
\newcommand{\ini}{\mathtt{ini}}
\newcommand{\ides}{\mathtt{ides}}
\newcommand{\imaj}{\mathtt{imaj}}
\newcommand{\exc}{\mathtt{exc}}
\newcommand{\maj}{\mathtt{maj}}
\newcommand{\fix}{\mathtt{fix}}
\newcommand{\des}{\mathtt{des}}
\newcommand{\aid}{\mathtt{aid}}
\newcommand{\pix}{\mathtt{pix}}
\newcommand{\inv}{\mathtt{inv}}
\newcommand{\lec}{\mathtt{lec}}
\newcommand{\aix}{\mathtt{aix}}
\newcommand{\mix}{\mathtt{mix}}
\newcommand{\das}{\mathtt{das}}
\newcommand{\eul}{\mathtt{eul}}
\newcommand{\mah}{\mathtt{mah}}
\newcommand{\Exc}{\mathtt{Exc}}
\newcommand{\Des}{\mathtt{Des}}
\newcommand{\Inv}{\mathtt{Inv}}
\newcommand{\Ai}{\mathtt{Ai}}
\newcommand{\ai}{\mathtt{ai}}
\author{Alexander Burstein}
\address{Department of Mathematics, Howard University, Washington, DC 20059 USA}
\email{aburstein@howard.edu}
\urladdr{http://www.alexanderburstein.org}
\date{February 14, 2014}
\title{On the distribution of some Euler-Mahonian statistics}
\keywords{Permutation statistic, Eulerian, Mahonian, major index, excedance, descent, pattern}
\begin{document}

\begin{abstract}
We give a direct combinatorial proof of the equidistribution of two pairs of permutation statistics, $(\des,\aid)$ and $(\lec,\inv)$, which have been previously shown to have the same joint distribution as $(\exc,\maj)$, the major index and the number of excedances of a permutation. Moreover, the triple $(\pix,\lec,\inv)$ was shown to have the same distribution as $(\fix,\exc,\maj)$, where $\fix$ is the number of fixed points of a permutation. We define a new statistic $\aix$ so that our bijection maps $(\pix,\lec,\inv)$ to $(\aix,\des,\aid)$. We also find an Eulerian partner $\das$ for a Mahonian statistic $\mix$ defined using mesh patterns, so that $(\das,\mix)$ is equidistributed with $(\des,\inv)$.


\end{abstract}

\maketitle

\section{Introduction} \label{sec:intro}

A \emph{combinatorial statistic} on a set $S$ is a map $\mathbf{f}:S\to\mathbb{N}^m$ for some integer $m\ge 0$. The \emph{distribution} of $\mathbf{f}$ is the map $\mathtt{d}_\mathbf{f}:\mathbb{N}^m\to\mathbb{N}$ with $\mathtt{d}_\mathbf{f}(\mathbf{i})=|\mathbf{f}^{-1}(\mathbf{i})|$ for $\mathbf{i}\in\mathbb{N}^m$, where $|\mathbf{f}^{-1}(\mathbf{i})|$ is the number of objects $s\in S$ such that $\mathbf{f}(s)=\mathbf{i}$. We say that statistics $\mathbf{f}$ and $\mathbf{g}$ are \emph{equidistributed} and write $\mathbf{f}\sim\mathbf{g}$ if $\mathtt{d}_\mathbf{f}=\mathtt{d}_\mathbf{g}$.

Let $\S_n$ be the set of permutations of $[n]=\{1,\dots,n\}$. The four classic combinatorial statistics on $\S_n$, the number of \emph{descents}, $\des$, the number of \emph{excedances}, $\exc$, the number of \emph{inversions}, $\inv$, and the \emph{major index}, $\maj$, are defined as follows:
\begin{alignat*}{2}
\Des\,\pi&=\{i:\ \pi(i)>\pi(i+1)\}, \quad &\des\,\pi&=|\Des\,\pi|,\\
\Exc\,\pi&=\{i:\ \pi(i)>i\}, \quad &\exc\,\pi&=|\Exc\,\pi|,\\
\Inv\,\pi&=\{(i,j):\ i<j \text{ and } \pi(i)>\pi(j)\}, \quad &\inv\,\pi&=|\Inv\,\pi|,\\
&\quad &\maj\,\pi&=\sum_{i\in\Des\,\pi}{i}.
\end{alignat*}

The sets $\Des\,\pi$ is called the \emph{descent set} of $\pi$, and its elements are called \emph{descents}. If $i$ is a descent of $\pi$, then $\pi(i)$ and $\pi(i+1)$ are called \emph{descent top} and \emph{descent bottom}, respectively. The terminology for the other two sets, $\Inv\,\pi$ and $\Exc\,\pi$, is similar. When the context is unambiguous, we may refer to the pair $\pi(i)\pi(i+1)$ as a descent or the pair $(\pi(i),\pi(j))$ as an inversion.

A statistic with the same distribution as $\des$ (such as $\exc$) is called \emph{Eulerian}, and a statistic with the same distribution as $\inv$ (such as $\maj$ \cite{MM1}) is called \emph{Mahonian}. If $\eul$ is Eulerian and $\mah$ is Mahonian, then the pair $(\eul,\mah)$ is called an Euler-Mahonian statistic.

A problem frequently considered since \cite{F1} is as follows: given a known Euler-Mahonian statistic $(\eul_1,\mah_1)$ and another Eulerian (resp. Mahonian) statistic $\eul_2$ (resp. $\mah_2$), to find its Mahonian (resp. Eulerian) partner $\mah_2$ (resp. $\eul_2$) so that $(\eul_1,\mah_1)\sim(\eul_2,\mah_2)$. In this paper, we will give two bijective proofs of equidistribution of two such pairs of bistatistics. In Section \ref{sec:des-aid-lec-inv}, we give a direct proof of a bijection between two statistics previously shown to have the same distribution as $(\exc,\maj)$, and in Section \ref{sec:des-inv-das-mix} we find an Eulerian partner $\das$ for a statistic $\mix$ recently defined by Br\"and\'en and Claesson \cite{BC} using mesh patterns so that $(\das,\mix)\sim(\des,\inv)$.

\section{$(\des,\aid)\sim(\lec,\inv)$} \label{sec:des-aid-lec-inv}

Of the four pairs $(\eul_1,\mah_1)$ involving $\des$ or $\exc$ and $\inv$ or $\maj$, the last to be considered was the pair $(\exc,\maj)$. First, Shareshian and Wachs \cite{SW} found a Mahonian statistic $\aid$ such that $(\exc,\maj)\sim(\des,\aid)$, and soon afterwards Foata and Han \cite{FH} proved that $(\exc,\maj)\sim(\lec,\inv)$ for an Eulerian statistic $\lec$ defined earlier by Gessel \cite{G} and related to the hook factorization of a permutation. In fact, Foata and Han proved a more refined result that $(\fix,\exc,\maj)\sim(\pix,\lec,\inv)$, where $\fix\,\pi$ is the number of fixed points of $\pi$ and $\pix\,\pi$ is another statistic related to hook factorization of $\pi$.

We will now define the $\aid$, $\lec$ and $\pix$. 

\begin{definition} \label{def:admissible}
An inversion $(i,j)\in\Inv\,\pi$ is \emph{admissible} if either $\pi(j)<\pi(j+1)$ or $\pi(j)>\pi(k)$ for some $i<k<j$. Let $\Ai\,\pi$ be the set of admissible inversions of $\pi$, and let
\[
\ai\,\pi=|\Ai\,\pi|, \qquad \aid\,\pi=\ai\,\pi+\des\,\pi.
\]
\end{definition}

\begin{definition} \label{def:hook}
A string $w=w_1w_2\dots w_r$ $(r\ge 2)$, over a totally ordered alphabet is a \emph{hook} if $w_1>w_2\le w_3\dots\le w_r$. Every string $\pi$ over $\mathbb{N}$ (and hence any permutation $\pi$) can be decomposed uniquely \cite{G} as $\pi=\pi_0\pi_1\dots\pi_k$ ($k\ge 0$), where $\pi_0$ is an nondecreasing string and each of $\pi_i$, $1\le i\le k$, is a hook. Then $\pi_0\pi_1\dots\pi_k$ is called the \emph{hook factorization} of $\pi$. 
\end{definition}

It is easy to see that the hook factorization is unique for any $\pi$, since either $\pi=\pi_0$ or we can recursively find the rightmost hook of $\pi$, which starts with the rightmost descent top of $\pi$. The statistics $\lec$ and $\pix$ are defined as follows:
\[
\lec\,\pi=\sum_{i=1}^{k}{\inv\,\pi_i}, \qquad \pix\,\pi=|\pi_0|,
\]
where $|\pi_0|$ is the length of $\pi_0$.

Shareshian and Wachs \cite{SW} gave a proof of $(\des,\aid)\sim(\exc,\maj)$ using tools from poset topology such as lexicographic shellability. Subsequently, Foata and Han \cite{FH} gave a two-step proof of $(\fix,\exc,\maj)\sim(\pix,\lec,\inv)$. The first step was a bijection on $\S_n$ showing that $(\fix,\exc,\maj)\sim(\pix,\lec,\imaj)$ (and, in fact, a more refined result that $(\fix,\exc,\des,\maj)\sim(\pix,\lec,\ides,\imaj)$), where $\imaj(\pi)=\maj(\pi^{-1})$ and $\ides(\pi)=\des(\pi^{-1})$, using Lyndon words and the word analogs of Kim-Zeng \cite{KZ} permutation decomposition and hook factorization. The second step was a bijection on $\S_n$ showing that $(\pix,\lec,\imaj)\sim(\pix,\lec,\inv)$.

Somewhat surprisingly, a direct bijective proof of $(\des,\aid)\sim(\lec,\inv)$ is simpler than any of the bijections mentioned above. We give such a proof and, in fact, find a new statistic $\aix$ that is a $\fix$-partner for $(\des,\aid)$, i.e. such that $(\aix,\des,\aid)\sim(\pix,\lec,\inv)\sim(\fix,\exc,\maj)$.

The statistic $\aix$ is defined as follows. Consider the set $\mathbb{N}^\ast$ of all strings in $\mathbb{N}$. Given a string $\pi\in\mathbb{N}^\ast$, let $m$ be the smallest letter in $\pi$ and let $\alpha$ be the maximal left prefix of $\pi$ not containing $m$, so that $\pi=\alpha m\beta$ for some string $\beta$. Then we recursively define $\aix\,\emptyset=0$ and, for $\pi\ne\emptyset$,
\begin{subnumcases}{\aix\,\pi=}
\aix\,\alpha,  & \text{ if } $\alpha\ne\emptyset,\ \beta\ne\emptyset$,\label{eq:aix-ab}\\
1+\aix\,\beta, & \text{ if } $\alpha=\emptyset$, \label{eq:aix-b}\\
0, & \text{ if } $\beta=\emptyset$. \label{eq:aix-a}
\end{subnumcases}
For example, $\aix(2589637\underline{1}4) = \aix(\underline{2}589637) = 1 + \aix(5896\underline{3}7) = 1 + \aix(\underline{5}896) = 1 + 1 + \aix(89\underline{6}) = 1 + 1 + 0 = 2$ (the smallest letters at each step are underlined).

\begin{proposition}
For any $\pi\in\mathbb{N}^\ast$, we have $\aix\,\pi\le 1+\pix\,\pi$.
\end{proposition}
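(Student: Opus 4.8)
The plan is to induct on the length $|\pi|$, reading the recursive definition of $\aix$ against the behavior of the nondecreasing-prefix length $\pix$. The base case $\pi=\emptyset$ is immediate, since $\aix\,\emptyset=0\le 1=1+\pix\,\emptyset$. For the inductive step I write $\pi=\alpha m\beta$ with $m$ the smallest letter and $\alpha$ the maximal $m$-free prefix, and I split according to the three cases \eqref{eq:aix-ab}, \eqref{eq:aix-b}, \eqref{eq:aix-a} of the definition. Both $\alpha$ and $\beta$ are proper substrings, so the inductive hypothesis applies to each.

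The first thing I would establish is how $\pix$ transforms under this decomposition, since these two identities are the engine of the whole argument. When $\alpha\ne\emptyset$, the letter $m$ immediately following $\alpha$ is strictly smaller than the last letter of $\alpha$ (as $m$ is the minimum and does not occur in $\alpha$), so the maximal nondecreasing prefix of $\pi$ does not extend past $\alpha$, giving $\pix\,\pi=\pix\,\alpha$. When $\alpha=\emptyset$, the string begins with its minimum $m$, which satisfies $m\le$ the first letter of $\beta$, so the nondecreasing prefix of $\pi$ is $m$ followed by that of $\beta$, giving $\pix\,\pi=1+\pix\,\beta$. With these in hand the cases close quickly: in \eqref{eq:aix-a} we have $\aix\,\pi=0\le 1+\pix\,\pi$ trivially; in \eqref{eq:aix-ab} the first identity and the inductive hypothesis on $\alpha$ give $\aix\,\pi=\aix\,\alpha\le 1+\pix\,\alpha=1+\pix\,\pi$, with the bound preserved verbatim; and in \eqref{eq:aix-b} the second identity turns the inductive bound $\aix\,\beta\le 1+\pix\,\beta$ into $\aix\,\pi=1+\aix\,\beta\le 2+\pix\,\beta=1+\pix\,\pi$.

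The only genuine subtlety—and the reason the statement carries a slack of $+1$ rather than asserting $\aix\,\pi\le\pix\,\pi$—is case \eqref{eq:aix-b}: the recursion for $\aix$ increments the count exactly when the leading letter is the minimum, which is precisely when $\pix$ also increments. The two extra $+1$'s therefore cancel, so the slack in the inductive hypothesis is consumed and regenerated at each such step and never accumulates. I expect the one place requiring care to be checking that this bookkeeping stays tight even when the minimum value occurs more than once (which is allowed in $\mathbb{N}^\ast$), so that the weak inequality $m\le$ first letter of $\beta$, rather than a strict one, still suffices for the identity $\pix\,\pi=1+\pix\,\beta$.
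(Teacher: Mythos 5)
There is a genuine gap, and it lies in your reading of $\pix$. In the paper, $\pix\,\pi$ is \emph{not} the length of the maximal nondecreasing prefix of $\pi$; it is the length of $\pi_0$ in the hook factorization $\pi=\pi_0\pi_1\cdots\pi_k$, and these two quantities can differ by one. Take $\pi=312$: here $m=1$, $\alpha=3$, $\beta=2$. The whole word $312$ is a single hook ($3>1\le 2$), so its hook factorization has $\pi_0=\emptyset$ and $\pix(312)=0$, while $\pix(3)=1$. Thus your first "engine" identity, $\pix\,\pi=\pix\,\alpha$ when $\alpha\ne\emptyset$, is false, and with it the inductive step in case \eqref{eq:aix-ab}: from $\aix\,\alpha\le 1+\pix\,\alpha$ you cannot conclude $\aix\,\pi\le 1+\pix\,\pi$, because $\pix\,\pi$ can be strictly smaller than $\pix\,\alpha$. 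The same example shows the proposition is tight ($\aix(312)=\aix(3)=1=1+\pix(312)$), so no small perturbation of the argument can avoid confronting this. Your second identity, $\pix(m\beta)=1+\pix\,\beta$ when $m$ is the minimum, \emph{is} true for the paper's $\pix$ (prepend $m$ to $\beta_0$ and invoke uniqueness of the hook factorization), so the failure is localized entirely in case \eqref{eq:aix-ab}.

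What your induction does prove correctly, once the quantity is renamed, is the bound $\aix\,\pi\le|\rho|$, where $\rho$ is the maximal nondecreasing prefix: both of your identities hold verbatim for $|\rho|$, and in case \eqref{eq:aix-b} the two increments cancel exactly, so no slack is needed at all. That is precisely the first half of the paper's (non-inductive) proof. The missing second half is the comparison of $|\rho|$ with $\pix$: since the leftmost hook $\pi_1$ begins immediately after $\pi_0$ and its first two letters form a descent, $\rho$ can extend at most one letter beyond $\pi_0$, whence $|\rho|\le 1+\pix\,\pi$; combining the two inequalities gives the proposition. This also corrects your diagnosis of where the $+1$ slack comes from: it is not generated by the bookkeeping in case \eqref{eq:aix-b} (where, as noted, the increments cancel), but by the possible one-letter overlap of the nondecreasing prefix with the first hook, exactly as in $\pi=312$.
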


\begin{proof}
The value of $\aix\,\pi$ is at most the length of $\rho$, the maximal nondecreasing left prefix of $\pi$. Since the leftmost hook of $\pi$ starts either at the leftmost descent or at the second leftmost descent (only if it immediately follows the leftmost descent), it follows that the length of $\rho$ is either $\pix\,\pi$ or $1+\pix\,\pi$.
\end{proof}

We also note that computations of statistics $\inv,\lec,\pix,\aid,\des,\aix$, involves only comparisons of values of letters or values of positions, but not values of a letter and a position (as in computation of $\exc$), so that these statistics can be extended to any string of distinct letters.

\subsection{The bijection}

Let $S$ be a set of distinct letters and $k\notin S$ be such that $S\cup\{k\}$ is totally ordered. Let $\tau$ be a permutation of $S$. Let $m$ be the smallest letter in $S\cup\{k\}$. Define a permutation $f(k,\tau)$ of $S\cup\{k\}$ recursively as follows: $f(k,\emptyset)=k$ and
\begin{subnumcases}{f(k,\tau)=}
f(k,\alpha)m\beta, &\qquad \text{if}\ $\tau = \alpha m\beta, k>m, \alpha\ne\emptyset, \beta\ne\emptyset$, \label{eq:f-ab}\\
f(k,\beta)m, &\qquad \text{if}\ $\tau = m\beta, k>m$, \label{eq:f-b}\\
km\alpha, &\qquad \text{if}\ $\tau = \alpha m, k>m$,  \label{eq:f-a}\\
k\tau, &\qquad \text{if} \ $k=m$.  \label{eq:f-m}
\end{subnumcases}
Now, for $\pi\in\S_n$, define $\phi_0(\pi)=\emptyset$ and $\phi_k(\pi)=f(\pi(n-k+1),\phi_{k-1}(\pi))$, $k=1,\dots,n$. Finally, let $\phi(\pi)=\phi_n(\pi)\in\S_n$. It is straightforward to see that $f$, and thus, $\phi$, are bijections.

Let $\ini\,\pi=\pi(1)$. Then we have that

\begin{theorem} \label{thm:aid-des-inv-lec}
$(\ini,\aix,\des,\aid)\,\phi(\pi)=(\ini,\pix,\lec,\inv)\,\pi$.
\end{theorem}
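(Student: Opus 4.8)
The plan is to induct on $n=|\pi|$, exploiting the recursive definition $\phi(\pi)=f(\pi(1),\phi(\pi(2)\cdots\pi(n)))$. Writing $c=\pi(1)$, $\sigma=\pi(2)\cdots\pi(n)$ and $\tau=\phi(\sigma)$, the inductive hypothesis gives $(\ini,\aix,\des,\aid)\,\tau=(\ini,\pix,\lec,\inv)\,\sigma$, so it suffices to show that the single insertion $\tau\mapsto f(c,\tau)$ changes $(\ini,\aix,\des,\aid)$ in exactly the way that prepending $c$ to $\sigma$ changes $(\ini,\pix,\lec,\inv)$. A useful preliminary observation is that, by the inductive hypothesis, $\ini\,\tau=\ini\,\sigma=\sigma(1)$; since $m=\min(S\cup\{c\})$, this aligns the top-level case of $f$ (whether $c=m$, and whether the minimum of $\tau$ sits at the front, interior, or end) with the relevant feature of $\sigma$ (whether $c<\sigma(1)$ or $c>\sigma(1)$, and the shape of the leading run). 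The $\ini$-coordinate is then immediate: an easy induction on the recursion \eqref{eq:f-ab}--\eqref{eq:f-m} shows that $f(c,\tau)$ always begins with $c$, matching $\ini(c\sigma)=c$.

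For the $\pix/\aix$ coordinate I would first record the prepend rule $\pix(c\sigma)=1+\pix(\sigma)$ when $\sigma=\emptyset$ or $c<\sigma(1)$, and $\pix(c\sigma)=0$ when $c>\sigma(1)$ (recall $\pix$ is one less than the length of the leading nondecreasing run when a descent is present). The cases of the definition of $\aix$ in \eqref{eq:aix-ab}--\eqref{eq:aix-a} are in direct correspondence with the cases of $f$, so a short induction on the recursion of $f$ yields the matching rule $\aix(f(c,\tau))=1+\aix(\tau)$ or $0$ according to the same dichotomy $c\lessgtr\ini(\tau)=\sigma(1)$, which the inductive hypothesis then closes. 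For the $\inv/\aid$ coordinate the prepend rule is $\inv(c\sigma)=\inv(\sigma)+b$ with $b=|\{s\in\sigma:\ s<c\}|$, and the clean matching statement to prove by induction on the recursion of $f$ is $\aid(f(c,\tau))=\aid(\tau)+b$. In \eqref{eq:f-m} nothing is added ($b=0$); in the remaining cases one checks that reinserting $c$ and the minimum $m$ raises $\aid$ by exactly $b$, the descent and admissible-inversion contributions being traded against each other but summing correctly. The computation stays exact because $m$ is globally smallest in its call, so by Definition \ref{def:admissible} it creates no new admissible inversion as a descent bottom and disturbs none of the existing admissibility conditions.

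The \textbf{main obstacle} is the $\lec/\des$ coordinate, since $\lec$ is governed by the global hook factorization rather than by a local comparison. Here the prepend rule is $\lec(c\sigma)=\lec(\sigma)$ when $c<\sigma(1)$, whereas when $c>\sigma(1)$ the letter $c$ becomes a new descent top and forms a new leading hook $c\,\sigma(1)\cdots\sigma(p-1)$, where $p$ is the length of the leading nondecreasing run of $\sigma$; all other hooks are unchanged, so $\lec$ increases by the inversions of this hook, namely by $q:=|\{i\le p-1:\ \sigma(i)<c\}|$. On the $f$ side the recursion shows that $\des(f(c,\tau))-\des(\tau)$ equals the number of steps of type \eqref{eq:f-b} whose peeled front-minimum is smaller than $c$: steps \eqref{eq:f-m} and \eqref{eq:f-a} contribute $0$, step \eqref{eq:f-ab} passes to the prefix $\alpha$ without change, and step \eqref{eq:f-b} appends the front-minimum and contributes exactly $1$.

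Matching these two counts is the crux. I expect the hardest point to be proving that the sequence of front-minima successively exposed and peeled by the recursion of $f$ on $\tau=\phi(\sigma)$ is precisely the leading increasing run $\sigma(1),\dots,\sigma(p-1)$ of $\sigma$; once this is known, thresholding by $c$ produces $q$ on both sides simultaneously. I would establish this structural link by strengthening the inductive hypothesis to carry the identification of the leading run of $\sigma$ with the front-minimum trace of $\phi(\sigma)$ explicitly, using that the globally smallest letter is prepended by \eqref{eq:f-m} exactly when it heads $\sigma$, and is thereafter shuttled by \eqref{eq:f-a}, \eqref{eq:f-b}, \eqref{eq:f-ab} in a way controlled by the run. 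With that link in hand the $\lec/\des$ equality follows, completing the induction and hence the theorem.
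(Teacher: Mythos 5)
Your overall architecture --- induction on $n$ via $\phi(\pi)=f(\pi(1),\phi(\sigma))$ with $\sigma=\pi(2)\cdots\pi(n)$, transferring each coordinate across a single insertion --- is exactly the paper's, and your treatment of $\ini$ and of the $\aid/\inv$ coordinate (the identity $\aid\,f(c,\tau)=\aid\,\tau+|\tau_{<c}|$, which is the paper's Lemma \ref{lem:aid}) is sound in outline. But there are two genuine gaps. First, your stated transfer rules for $\pix$ and $\aix$ are false. Take $\sigma=21$, $c=3$: you claim $\pix(c\sigma)=0$ whenever $c>\sigma(1)$, but $321$ is not a hook (it fails $2\le 1$), so its hook factorization is $3\,|\,21$ and $\pix(321)=1$; likewise $\phi(321)=f(3,21)=312$ and $\aix(312)=\aix(3)=1$, not $0$. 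The case you are missing is precisely $\pix\,\sigma=0$ (resp.\ $\aix\,\tau=0$), where prepending (resp.\ inserting) \emph{any} letter yields the value $1$; this is the content of the paper's Lemma \ref{lem:no-2-0s-in-a-row}, and your parenthetical characterization of $\pix$ (``one less than the length of the leading nondecreasing run when a descent is present'') contradicts the paper's own Proposition stating that the leading run has length $\pix\,\pi$ \emph{or} $1+\pix\,\pi$. Since your two false rules fail in parallel, the final equality is not contradicted, but a proof resting on false intermediate claims is not a proof; moreover the same false dichotomy infects your $\lec/\des$ analysis (``when $c>\sigma(1)$ the letter $c$ becomes a new descent top and forms a new leading hook'' is wrong when $\pix\,\sigma=0$, where $c$ simply becomes the new $\pi_0$), and your $q$ has an off-by-one error when $\sigma$ has no descent (there the new hook is $c\,\sigma(1)\cdots\sigma(p)$, so $q=|\{i\le p:\sigma(i)<c\}|$).

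Second, and more seriously, the step you yourself call the crux --- that the front-minima peeled by the \eqref{eq:f-b}-steps in computing $f(c,\phi(\sigma))$ are precisely the leading nondecreasing run of $\sigma$, so that thresholding by $c$ matches the two counts --- is never proved; you only announce a plan to ``strengthen the inductive hypothesis.'' That identification is a nontrivial structural fact about $\phi$ (the letters of $\phi(\sigma)$ are heavily rearranged relative to $\sigma$), and it is the entire difficulty of the theorem. It is worth noting that the paper never proves such a statement: instead it establishes three transfer lemmas valid simultaneously for $(f,\aix,\des)$ and for prepending with $(\pix,\lec)$ --- the dichotomy Lemma \ref{lem:same-des}, Lemma \ref{lem:no-2-0s-in-a-row}, and crucially Lemma \ref{lem:delete-second}: if $\aix\,f(c,\tau)=0$ and $\tau=f(l,\tau')$, then $\des\,f(c,\tau)=1+\des\,f(c,\tau')$. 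This last lemma replaces any explicit computation of the descent increment by a reduction to the shorter permutation $c\sigma'$ (delete the \emph{second} letter of $\pi=c\,l\,\sigma'$), which strong induction then absorbs; no identification of peeled letters with the leading run is ever needed. If you wish to keep your more explicit route, you must actually prove the peeling claim, with the boundary cases corrected as above; as written, the hardest part of the theorem is missing.
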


We will split the proof of the theorem into several parts.

\begin{lemma} \label{lem:ini}
$\ini\,\phi(\pi)=\ini\,\pi$.
\end{lemma}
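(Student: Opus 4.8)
The plan is to prove first the auxiliary claim that $\ini\,f(k,\tau)=k$ for \emph{every} admissible pair $(k,\tau)$, i.e. for every string $\tau$ of distinct letters from a set $S$ and every $k\notin S$ with $S\cup\{k\}$ totally ordered, and then to deduce the lemma as an immediate corollary. Granting the claim, the argument is short: by definition $\phi(\pi)=\phi_n(\pi)=f(\pi(1),\phi_{n-1}(\pi))$, and since $f(k,\tau)$ is by construction a permutation of $S\cup\{k\}$, a straightforward induction on $k$ shows that $\phi_{n-1}(\pi)$ is a permutation of $\{\pi(2),\dots,\pi(n)\}$, with $\pi(1)$ absent from this set. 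The auxiliary claim applied with $k=\pi(1)$ then yields $\ini\,\phi(\pi)=\ini\,f(\pi(1),\phi_{n-1}(\pi))=\pi(1)=\ini\,\pi$.

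To establish the auxiliary claim I would induct on $|\tau|$. The base case $\tau=\emptyset$ is immediate, since $f(k,\emptyset)=k$. For the inductive step I would run through the four defining cases of $f$. In the two terminal cases \eqref{eq:f-a} and \eqref{eq:f-m}, the value $f(k,\tau)$ begins literally with $k$ (it equals $km\alpha$ and $k\tau$ respectively), so there is nothing to check. In the two recursive cases \eqref{eq:f-ab} and \eqref{eq:f-b}, we have $f(k,\tau)=f(k,\alpha)m\beta$ and $f(k,\tau)=f(k,\beta)m$; in each the first letter of $f(k,\tau)$ equals the first letter of a value of $f$ whose second argument ($\alpha$ or $\beta$) is strictly shorter than $\tau$ and still omits $k$. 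The inductive hypothesis then gives $\ini\,f(k,\alpha)=k$ (resp. $\ini\,f(k,\beta)=k$), closing the step.

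I do not expect a serious obstacle; the only point needing care is the bookkeeping that, in the recursive cases, the second arguments $\alpha,\beta$ are genuinely shorter than $\tau$ (here one uses that $m$, and in case \eqref{eq:f-ab} also the nonempty $\beta$, has been stripped off) and still exclude $k$, so that the induction hypothesis is applicable and the four cases exhaust all possibilities. This is precisely where strengthening the target to the uniform statement $\ini\,f(k,\tau)=k$, rather than reasoning about $\phi$ directly, pays off: it makes the recursion self-contained, so that each recursive branch feeds back into the same claim.
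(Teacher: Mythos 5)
Your proposal is correct and matches the paper's own argument: the paper likewise proves, by induction on the size of $\tau$ starting from $f(k,\emptyset)=k$, that $f(k,\tau)$ always begins with $k$, and then concludes that $\phi(\pi)=f(\pi(1),\phi_{n-1}(\pi))$ starts with $\pi(1)$. You have simply written out the case analysis and the bookkeeping that the paper leaves implicit.
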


\begin{proof}
Note that $f(k,\emptyset)=k$, so by the definition of $f$ and induction on the size of $\tau$ we get that $f(k,\tau)$ starts with $k$. Thus, $\phi(\pi)$ starts with $\pi(1)$.
\end{proof}

Given a string $\pi$ over a totally ordered alphabet define $k$-suffix of $\pi$, $s_k(\pi)$, to be the block of $k$ rightmost letters of $\pi$. Also, define $\pi_{<k}$ (resp. $\pi_{>k}$) to be the subsequence of $\pi$ consisting of letters of $\pi$ that are less (resp. greater) than $k$.

\begin{lemma} \label{lem:aid}
$\aid\,f(k,\tau)=\aid\,\tau+|\tau_{<k}|$.
\end{lemma}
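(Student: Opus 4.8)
The plan is to prove Lemma~\ref{lem:aid} by induction on $|\tau|$, following the four-case recursive definition of $f(k,\tau)$. The base case $\tau=\emptyset$ gives $f(k,\emptyset)=k$, a single letter with $\aid\,k=0$ and $|\emptyset_{<k}|=0$, so the identity holds trivially. For the inductive step I would treat each of the cases \eqref{eq:f-ab}--\eqref{eq:f-m} separately, tracking exactly how the three ingredients of $\aid=\ai+\des$ change: the descent count, the admissible-inversion count, and the quantity $|\tau_{<k}|$ on the right-hand side. The easiest case is \eqref{eq:f-m}, where $k=m$ is the global minimum and $f(k,\tau)=k\tau$; here prepending the smallest letter creates no new descent (since $k\le\tau(1)$) and no new admissible inversion with $k$ as descent bottom, while $|\tau_{<k}|=0$, so $\aid$ is unchanged and the identity reads $\aid\,\tau=\aid\,\tau+0$.

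The substantive work lies in cases \eqref{eq:f-ab}, \eqref{eq:f-b}, and \eqref{eq:f-a}, where $k>m$ and we must account for the insertion of the minimal letter $m$ together with the recursive application of $f$ to a shorter string. In each case I would write $\aid\,f(k,\tau)$ as the contribution of the recursively-built prefix ($\aid\,f(k,\alpha)$ or $\aid\,f(k,\beta)$, to which the inductive hypothesis applies) plus the \emph{boundary} contributions created by splicing in $m$ and by the junction between the prefix and the remaining block. The key observation I expect to exploit is that $m$, being the smallest letter present, is a descent top relative to whatever immediately follows it and is never a descent bottom, and that inversions $(\,\cdot\,,j)$ with $\pi(j)=m$ are automatically admissible whenever $m$ has a larger letter to its left within the relevant window. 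Thus inserting $m$ contributes a predictable number of new descents and new admissible inversions, and I anticipate these boundary terms collapse to give exactly the extra $|\tau_{<k}|$ (adjusted by the recursive $|\alpha_{<k}|$ or $|\beta_{<k}|$ supplied by the inductive hypothesis, whose sum reconstructs $|\tau_{<k}|$).

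The main obstacle will be the bookkeeping for admissible inversions in case \eqref{eq:f-ab}, $f(k,\tau)=f(k,\alpha)m\beta$. Here I must verify two things: first, that the admissible inversions internal to $\beta$ and those between the prefix $f(k,\alpha)$ and $\beta$ are preserved under the transformation, so that no spurious admissibility is created or destroyed by the new letter $m$ sitting between them; and second, that the inversions having $m$ as their smaller element contribute precisely the count that, combined with the inductive value $\aid\,f(k,\alpha)=\aid\,\alpha+|\alpha_{<k}|$, yields $\aid\,\tau+|\tau_{<k}|$. The delicate point is the definition of admissibility itself, which depends on the letter $\pi(j+1)$ immediately following the inversion bottom and on intermediate letters $\pi(k)$; inserting $m$ changes these local neighborhoods, so I would carefully check that an inversion $(i,j)$ of $\tau$ remains admissible in $f(k,\tau)$ and vice versa. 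I expect that the recursive structure of $f$ is designed precisely so that $m$ always lands where it promotes the correct number of inversions to admissible status, making the count work out, but confirming this requires a careful case analysis of where $\alpha$ ends and $\beta$ begins relative to the position of $m$.
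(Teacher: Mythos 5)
Your induction skeleton---induction on $|\tau|$ over the four cases of the definition of $f$, with the base case and case \eqref{eq:f-m} dispatched first---is exactly the structure of the paper's proof, and your treatment of those two easy cases is correct. But the substantive cases \eqref{eq:f-ab}, \eqref{eq:f-b}, \eqref{eq:f-a} are left as a plan (``confirming this requires a careful case analysis''), and the one concrete observation you offer as the engine of that plan is stated backwards. Since $m$ is the smallest letter of $\tau$, it is \emph{never} a descent top (whatever follows $m$ is larger, so $m$ begins an ascent), and it \emph{is} a descent bottom whenever anything precedes it; you assert the opposite. Likewise, an inversion $(i,j)$ with bottom $m$ is admissible precisely when $m$ is \emph{not the last letter}: the witness is the condition $\pi(j)<\pi(j+1)$, which holds automatically because $m$ is minimal, while the alternative condition $\pi(j)>\pi(k)$ for some intermediate $k$ can never hold for $m$. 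Admissibility has nothing to do with ``a larger letter to its left within the relevant window''---that is merely what makes the pair an inversion in the first place.

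These corrected facts are precisely what drive the counting, and without them the boundary terms you describe do not collapse. In case \eqref{eq:f-ab} one shows that \emph{every} inversion from the prefix ($\alpha$ in $\tau$, or $k\hat\alpha$ in $f(k,\tau)$) into $m\beta$ is admissible: if the bottom is $m$ it is followed by an ascent, and if the bottom lies in $\beta$ then $m$ sits in between as a witness. Hence these inversions agree in $\tau$ and $f(k,\tau)$ except for the new ones from $k$, which are all admissible and number $|m\beta_{<k}|$; adding the inductive $|\alpha_{<k}|$ gives $|\tau_{<k}|$. In cases \eqref{eq:f-b} and \eqref{eq:f-a} the \emph{opposite} fact is needed: a trailing $m$ supports no admissible inversion ending on it, so in \eqref{eq:f-b} the extra unit in $|m\beta_{<k}|=|\beta_{<k}|+1$ is paid for not by an inversion but by the new final descent onto $m$ (so $\des$ increases by $1$ while $\ai$ does not), and in \eqref{eq:f-a} one checks $\des\,km\alpha=\des\,\alpha m$ (the descent onto $m$ from the last letter of $\alpha$ is traded for the descent $km$) together with $\ai\,km\alpha=\ai\,m\alpha+|m\alpha_{<k}|=\ai\,\alpha m+|\tau_{<k}|$. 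Your proposal gestures at this bookkeeping but neither states nor verifies these claims, and with the admissibility criterion inverted the accounting would come out wrong; as written, the proof has a genuine gap in all three nontrivial cases.
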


\begin{proof}
We will prove this lemma by induction on the length of $\tau$. Clearly, the lemma is true for $\tau=\emptyset$. Assume that the lemma holds for all strings of distinct letters of length less than $|\tau|$. Let $m=\min\,\tau$ and consider each case in the definition of $f(k,\tau)$.

\emph{Case (a).} Suppose that $\tau=\alpha m\beta$, $k>m$, $\alpha\ne\emptyset$, $\beta\ne\emptyset$. Then $f(k,\tau)=f(k,\alpha)m\beta$, so by Lemma \ref{lem:ini}, $f(k,\alpha m\beta)=k\hat\alpha m\beta$ for some permutation $\hat\alpha$ of $\alpha$. By induction (since $|\alpha|<|\tau|$), we have
\[
\aid\,f(k,\alpha)=\aid\,\alpha+|\alpha_{<k}|.
\]
Consider the inversions $ab$ in $\tau$ that are \emph{from $\alpha$ to $m\beta$}, i.e. those where the inversion top is $a\in\alpha$ and the inversion bottom is $b\in m\beta$ (so $a>b$). If $b=m$, then it is followed by an ascent, and hence any inversion with inversion bottom $m$ is admissible (and the number of such (admissible) inversions in $\tau$ is $|\alpha|$). If $b\in\beta$, then $m<b$ and $m$ is between $a$ and $b$ in $\tau$, so the inversion $ab$ is admissible. Thus, all inversions from $\alpha$ to $m\beta$ are admissible.

Since $\hat\alpha$ is a permutation of $\alpha$, we likewise have that all inversions in $f(k,\tau)$ from $\hat\alpha$ to $m\beta$ are admissible, and in fact, are the same inversions as the inversions from $\alpha$ to $m\beta$ in $\tau$. Moreover, since $\alpha>m$ (i.e. every letter in $\alpha$ is greater than $m$) and $f$ does not change the suffix $m\beta$ of $\tau$, it follows that the number of admissible inversions in $m\beta$ and the number of descents with descent bottoms in $m\beta$ are the same in $\tau$ and $f(k,\tau)$.

Thus, the only remaining pairs left to consider are inversions from $k$ to $m\beta$. As above, we see that all inversions from $k$ to $m\beta$ are admissible, and the number of such inversions is exactly $|m\beta_{<k}|$. Therefore,
\[
\aid\,f(k,\tau)-\aid\,\tau=|\alpha_{<k}|+|m\beta_{<k}|=|\alpha_{<k}m\beta_{<k}|=|\tau_{<k}|,
\]
as desired.

\emph{Case (b).} Suppose that $\tau=m\beta$ and $k>m$. Then $f(k,\tau)=f(k,\beta)m=k\hat\beta m$ for some permutation $\hat\beta$ of $\beta$. As before, we have by induction that
\[
\aid\,f(k,\beta)=\aid\,\beta+|\beta_{<k}|.
\]
Since $k\hat\beta>m$ and $m$ is last in $k\hat\beta m$, it follows that no admissible inversion ends on $m$. Thus, $\ai\,f(k,\beta)m=\ai\,f(k,\beta)$ and $\des\,f(k,\beta)m=\des\,f(k,\beta)+1$, where 1 counts the last descent to $m$. Finally, $\aid\,\tau=\aid\,m\beta=\aid\,\beta$ since $m<\beta$ and hence no inversion (or descent) of $\tau$ begins with $m$. Therefore,
\[
\aid\,f(k,\tau)=\aid\,f(k,\beta)+1=\aid\,\beta+|\beta_{<k}|+1=\aid\,m\beta+|m\beta_{<k}|=\aid\,\tau+|\tau_{<k}|.
\]

\emph{Case (c).} Suppose that $\tau =\alpha m$, $k>m$. Then $f(k,\tau)=km\alpha$. Thus, the descents of $f(k,\tau)$ are obtained from descents of $\tau$ by replacing the descent from the right letter of $\alpha$ to $m$ with the descent $km$, so $\des\,f(k,\tau)=\des\,\tau$. As Case (b), no admissible inversion of $\tau$ ends on $m$, and, as in Cases (a) and (b), all inversions from $k$ to $m\alpha$ are admissible. Thus,
\[
\ai\,f(k,\tau)=\ai\,km\alpha=\ai\,m\alpha +|m\alpha_{<k}|=\ai\,\alpha m +|\alpha_{<k}m|=\ai\,\tau+|\tau_{<k}|,
\]
so
\[
\aid\,f(k,\tau)=\ai\,f(k,\tau)+\des\,f(k,\tau)=\ai\,\tau+|\tau_{<k}|+\des\,\tau=\aid\,\tau+|\tau_{<k}|.
\]

\emph{Case (d).} If $k<\tau$, then no inversion (or descent) of $f(k,\tau)=k\tau$ starts with $k$, and $|\tau_{<k}|=0$, so $\aid\,f(k,\tau)=\aid\,\tau=\aid\,\tau+\tau_{<k}$. This ends the proof.
\end{proof}

\begin{lemma} \label{lem:aid-inv}
$\aid\,\phi(\pi)=\inv\,\pi$.
\end{lemma}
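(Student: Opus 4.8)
The plan is to telescope Lemma~\ref{lem:aid} along the recursion defining $\phi$. Writing $\phi_k=\phi_k(\pi)$ for brevity, the construction sets $\phi_0=\emptyset$ and $\phi_k=f(\pi(n-k+1),\phi_{k-1})$ for $k=1,\dots,n$, with $\phi=\phi_n$. Applying Lemma~\ref{lem:aid} with the role of $k$ played by the inserted letter $\pi(n-k+1)$ and the role of $\tau$ played by $\phi_{k-1}$ gives
\[
\aid\,\phi_k=\aid\,\phi_{k-1}+|(\phi_{k-1})_{<\pi(n-k+1)}|.
\]
Since $\aid\,\phi_0=\aid\,\emptyset=0$, summing from $k=1$ to $n$ yields
\[
\aid\,\phi(\pi)=\sum_{k=1}^{n}|(\phi_{k-1})_{<\pi(n-k+1)}|,
\]
so it remains only to identify each summand combinatorially.

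First I would record the letter set of each intermediate string. Because $f(k,\tau)$ is by definition a permutation of $S\cup\{k\}$, an easy induction on $k$ shows that $\phi_{k-1}$ is a rearrangement of the last $k-1$ letters of $\pi$, namely the set $\{\pi(n-k+2),\dots,\pi(n)\}$ (and $\phi_0=\emptyset$). Consequently,
\[
|(\phi_{k-1})_{<\pi(n-k+1)}|=|\{\,j:\ n-k+2\le j\le n,\ \pi(j)<\pi(n-k+1)\,\}|,
\]
which is precisely the number of inversions of $\pi$ whose top sits at position $i=n-k+1$, i.e.\ the number of pairs $(i,j)\in\Inv\,\pi$ with this fixed $i$.

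Finally, as $k$ ranges over $1,\dots,n$ the index $i=n-k+1$ ranges over all positions $1,\dots,n$, so the sum counts every inversion of $\pi$ exactly once, giving $\aid\,\phi(\pi)=\inv\,\pi$. The whole argument is short; the only point requiring care --- and the step I would treat as the main (if modest) obstacle --- is verifying that $\phi_{k-1}$ has exactly the letter set $\{\pi(n-k+2),\dots,\pi(n)\}$, since this is what converts the quantity $|\tau_{<k}|$ supplied by Lemma~\ref{lem:aid} into a genuine inversion count of the original $\pi$ rather than of some intermediate word. This follows immediately from the fact that $f$ neither deletes nor duplicates letters, but it is the hinge on which the whole identity rests.
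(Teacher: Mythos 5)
Your proof is correct and follows essentially the same route as the paper: telescoping Lemma~\ref{lem:aid} over the recursion for $\phi$, then using the fact that each intermediate string $\phi_{k-1}(\pi)$ is a rearrangement of the corresponding suffix of $\pi$ to recognize each summand $|(\phi_{k-1})_{<\pi(n-k+1)}|$ as the number of inversions of $\pi$ with a fixed inversion top. The only difference is a harmless reindexing of the sum.
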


\begin{proof}
Applying Lemma \ref{lem:aid} repeatedly, we obtain
\[
\aid\,\phi(\pi)=\sum_{k=0}^{n-1}{|\phi_k(\pi)_{<\pi(n-k)}|}.
\]
But each $\phi_k(\pi)$ is a permutation of $s_k(\pi)$, so
\[
\aid\,\phi(\pi)=\sum_{k=0}^{n-1}{|s_k(\pi)_{<\pi(n-k)}|}.
\]
Each summand on the right is the number of inversions of $\pi$ with inversion top $\pi(n-k)$. Summing over $k=0,1\dots,n-1$, we get $\aid\,\phi(\pi)=\inv(\pi)$, as desired.
\end{proof}

Consider the descents of $\tau$ and $f(k,\tau)$ in each case of the definition of $f$. In case \eqref{eq:f-ab}, we have $\alpha>m$ and $f(k,\tau)=f(k,\alpha m\beta)=f(k,\alpha)m\beta$, so the descent bottoms in the right prefix $m\beta$ of both $\tau$ and $f(k,\tau)$ are the same, and hence
\[
\des\,f(k,\tau)-\des\,\tau=\des\,f(k,\alpha)-\des\,\alpha.
\]
Note that in this case $\aix\,\tau=\aix\,\alpha$ and $\aix\,f(k,\tau)=\aix\,f(k,\alpha)$.

In case \eqref{eq:f-b}, $\des\,\tau=\des\,m\beta=\des\,\beta$ since $m<\beta$. However, $\des\,f(k,\tau)=\des\,f(k,\beta)m=\des\,f(k,\beta)+1$ since $f(k,\beta)=k\hat\beta$ for some permutation $\hat\beta$ of $\beta$. Thus,
\[
\des\,f(k,\tau)-\des\,\tau=\des\,f(k,\beta)-\des\,\beta +1.
\]
Note that in this case $\aix\,f(k,\tau)=0$, and $\aix\,\tau=1+\aix\,\beta>0$.

In case \eqref{eq:f-a}, let $a$ be the last letter of $\alpha$. Then the descents of $f(k,\tau)=km\alpha$, $\alpha\ne\emptyset$ are obtained from the descents of $\tau=\alpha m$ by replacing the descent $am$ with the descent $km$. Thus, $\des\,f(k,\tau)=\des\,\tau=\des\,\alpha+1$, and hence
\[
\des\,f(k,\tau)-\des\,\tau=0.
\]
Note that in this case $\aix\,\tau=0$ and $\aix\,f(k,\tau)=\aix\,k=1=1+\aix\,\tau$.

In case \eqref{eq:f-m}, $f(k,\tau)=k\tau$, and $k<\tau$, so $\des\,f(k,\tau)=\des\,\tau$, and hence again
\[
\des\,f(k,\tau)-\des\,\tau=0.
\]
Note that in this case $\aix\,f(k,\tau)=\aix\,\tau+1>0$.

Finally, $\des\,f(k,\emptyset)-\des\,\emptyset=1-0=1$. Thus, we can see by induction on the length of $\tau$ that
\[
\des\,f(k,\tau)-\des\,\tau\ge 0
\]
for any string $\tau$ of distinct letters, and the difference stays the same or increases by 1 with each application of rules \eqref{eq:f-ab} or \eqref{eq:f-b}, respectively.

\begin{lemma} \label{lem:same-des}
We have $\des\,f(k,\tau)=\des\,\tau$ if and only if $\aix\,f(k,\tau)=\aix\,\tau+1>0$, and $\des\,f(k,\tau)>\des\,\tau$ if and only if $\aix\,f(k,\tau)=0$.
\end{lemma}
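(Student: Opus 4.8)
The plan is to prove Lemma~\ref{lem:same-des} by a direct case analysis that runs in parallel with the computation of $\des\,f(k,\tau)-\des\,\tau$ already carried out in the four cases \eqref{eq:f-ab}--\eqref{eq:f-m} preceding the statement. The key observation is that the preceding discussion has, in each case, simultaneously recorded two pieces of information: how $\des$ changes under $f$, and how $\aix$ behaves on $\tau$ and on $f(k,\tau)$. So the heart of the proof is simply to read off, case by case, that the two conditions in the lemma track each other exactly, and then to promote this to an inductive statement.

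First I would set up the induction on $|\tau|$, noting that the base case $\tau=\emptyset$ gives $f(k,\emptyset)=k$, where $\des\,k=0=\des\,\emptyset$ but $\aix\,k=1=\aix\,\emptyset+1$, consistent with the first biconditional. Then I would walk through the four cases. In case \eqref{eq:f-a} and case \eqref{eq:f-m}, the preceding computation gives $\des\,f(k,\tau)-\des\,\tau=0$ together with $\aix\,f(k,\tau)=\aix\,\tau+1>0$, so both sides of the first biconditional hold. In case \eqref{eq:f-b}, we have $\des\,f(k,\tau)-\des\,\tau=\des\,f(k,\beta)-\des\,\beta+1$ and $\aix\,f(k,\tau)=0$ while $\aix\,\tau=1+\aix\,\beta>0$; here I invoke the inductive hypothesis applied to $\beta$ to conclude that $\des\,f(k,\beta)=\des\,\beta$, which forces $\des\,f(k,\tau)=\des\,\tau+1>\des\,\tau$, matching $\aix\,f(k,\tau)=0$. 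Case \eqref{eq:f-ab} is the purely recursive one: since $\des\,f(k,\tau)-\des\,\tau=\des\,f(k,\alpha)-\des\,\alpha$ and $\aix\,\tau=\aix\,\alpha$, $\aix\,f(k,\tau)=\aix\,f(k,\alpha)$, the lemma for $\tau$ reduces verbatim to the lemma for $\alpha$, which holds by induction since $|\alpha|<|\tau|$.

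The one place requiring genuine care is case \eqref{eq:f-b}, and I expect it to be the main (though modest) obstacle: here the equation $\des\,f(k,\tau)-\des\,\tau=\des\,f(k,\beta)-\des\,\beta+1$ contains a surviving $+1$, so establishing that $\des$ strictly increases requires knowing that the recursive difference $\des\,f(k,\beta)-\des\,\beta$ vanishes rather than merely being nonnegative. This is exactly where the inductive hypothesis must be used in its full strength, applied to $\beta$ in conjunction with the fact that $\aix\,f(k,\beta)=\aix\,\beta+1>0$ (which holds because $f(k,\beta)=k\hat\beta$ begins with $k$ and $\aix$ of such a string is positive). Everything else is bookkeeping already done above, so once case \eqref{eq:f-b} is handled the biconditionals follow uniformly, completing the induction.
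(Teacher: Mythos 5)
Your induction scheme is fine in outline, and cases \eqref{eq:f-ab}, \eqref{eq:f-a}, \eqref{eq:f-m} and the base case are handled correctly; but your case \eqref{eq:f-b} --- the one you yourself flag as the crux --- rests on a false claim. You assert that $\aix\,f(k,\beta)=\aix\,\beta+1>0$ because ``$f(k,\beta)=k\hat\beta$ begins with $k$ and $\aix$ of such a string is positive,'' and from this (via the inductive hypothesis) you deduce $\des\,f(k,\beta)=\des\,\beta$, hence $\des\,f(k,\tau)=\des\,\tau+1$. Beginning with a large letter does not make $\aix$ positive: by \eqref{eq:aix-a}, any string with a nonempty prefix before its minimum and nothing after it has $\aix=0$. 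Concretely, take $\tau=123$ and $k=4$, so that \eqref{eq:f-b} applies with $m=1$, $\beta=23$. Then $f(4,23)=432$, which begins with $4$ but has $\aix\,432=0$ and $\des\,f(4,23)=2\ne 0=\des\,23$; consequently $f(4,123)=4321$ and $\des\,f(4,123)-\des\,123=3$, not $1$. Rule \eqref{eq:f-b} can fire repeatedly along one computation of $f$, each application adding $1$ to the descent difference, so your conclusion that the difference equals exactly $1$ in this case cannot be right, and neither can the intermediate claims leading to it.

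Fortunately the error is local and the repair is cheap: in case \eqref{eq:f-b} you never need $\des\,f(k,\beta)=\des\,\beta$, only $\des\,f(k,\beta)-\des\,\beta\ge 0$. That nonnegativity is exactly what the discussion preceding the lemma establishes by induction (and you must quote it, or carry it along as a strengthened induction hypothesis --- the two biconditionals for shorter strings do not by themselves rule out a negative difference). With it, case \eqref{eq:f-b} yields $\des\,f(k,\tau)-\des\,\tau\ge 1>0$ together with $\aix\,f(k,\tau)=0$ and $\aix\,\tau=1+\aix\,\beta>0$, so the second biconditional holds and the first holds with both sides false (as $0\ne\aix\,\tau+1$). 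Once corrected, your induction is a legitimate alternative to the paper's proof, which instead unfolds the maximal run of \eqref{eq:f-ab}-steps and evaluates $\aix$ directly at the first step of another type; the two arguments are organized differently but use the same case bookkeeping.
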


\begin{proof}
\emph{Case 1.} Suppose that $\des\,f(k,\tau)=\des\,\tau$. Then it follows from the above argument that the computation of $f(k,\tau)$ involves no application of \eqref{eq:f-b}, i.e. a repeated application of \eqref{eq:f-ab} (possibly zero times) followed by a single application of \eqref{eq:f-a} or \eqref{eq:f-m} or $f(k,\emptyset)=k$. The conditions in the case \eqref{eq:f-ab} are the same as in the case \eqref{eq:aix-ab}, so applying \eqref{eq:f-ab} repeatedly, we obtain either
\begin{itemize}
\item a prefix $\alpha'm'$ of $\tau$ such that $\alpha\ne\emptyset$, $\alpha'>m'$, $k>m'$, $\aix\,\tau=\aix\,\alpha'm'$ and $\aix\,f(k,\tau)=\aix\,f(k,\alpha'm')$, or
\item a prefix $\alpha''$ of $\tau$ such that $k<\alpha''$, $\aix\,\tau=\aix\,\alpha''$ and $\aix\,f(k,\tau)=\aix\,f(k,\alpha'')$.
\end{itemize}
In the former case, we have $\aix\,\tau=\aix\,\alpha'm'=0$ and $\aix\,f(k,\tau)=\aix\,f(k,\alpha'm')=\aix\,km'\alpha'=\aix\,k=1=1+\aix\,\tau$. In the latter case, we have $\aix\,f(k,\alpha'')=\aix,k\alpha''=1+\aix\,\alpha''=1+\aix\,\tau$. Thus, in either case, $\des\,f(k,\tau)=\des\,\tau$ implies $\aix\,f(k,\tau)=\aix\,\tau+1$. The converse is proved similarly.

\smallskip

\emph{Case 2.} Suppose that $\des\,f(k,\tau)>\des\,\tau$. Then the computation of $f(k,\tau)$ starts with a repeated application of \eqref{eq:f-ab} (possibly zero times) followed by an application of \eqref{eq:f-b} (after which the process may still continue). Thus, as before, after repeated application of \eqref{eq:f-ab}, we obtain a prefix $m'\beta'$ of $\tau$ such that $k>m'$, $m'<\beta'$ and $\aix\,f(k,\tau)=\aix\,f(k,m'\beta')=\aix\,f(k,\beta')m'$. But $f(k,\beta')=k\hat\beta'$ for some permutation $\hat\beta'$ of $\beta'$, so $f(k,\beta')>m'$, and hence $\aix\,f(k,\beta')m'=0$, which in turn implies that $\aix\,f(k,\tau)=0$, as desired. The converse is proved similarly.
\end{proof}

\begin{lemma} \label{lem:no-2-0s-in-a-row}
If $\aix\,\tau=0$, then for any $k$, we have $\aix\,f(k,\tau)=1$ and $\des\,f(k,\tau)=\des\,\tau$.
\end{lemma}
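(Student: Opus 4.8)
The plan is to prove the two conclusions in a linked way, using Lemma~\ref{lem:same-des} to pass from one to the other: I will first establish $\aix\,f(k,\tau)=1$ by induction on $|\tau|$, and then read off $\des\,f(k,\tau)=\des\,\tau$ from Lemma~\ref{lem:same-des}. Before the induction I would unpack the hypothesis. Writing $\tau=\alpha m\beta$ with $m=\min\tau$, the case $\alpha=\emptyset$ always yields $\aix\,\tau\ge 1$ by rule~\eqref{eq:aix-b} (a single letter being read as $\aix\,m=1$); hence for $\tau\ne\emptyset$ the equality $\aix\,\tau=0$ forces $\alpha\ne\emptyset$, and then~\eqref{eq:aix-ab}--\eqref{eq:aix-a} force either $\beta=\emptyset$ or $\aix\,\alpha=0$. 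In particular, every nonempty $\tau$ with $\aix\,\tau=0$ has $\alpha\ne\emptyset$ and hence length at least two, so single letters never satisfy the hypothesis.

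The base case $\tau=\emptyset$ is immediate, since $f(k,\emptyset)=k$ and $\aix\,k=1$. For the inductive step I would split on the position of $k$ relative to $m$. If $k<m$ we are in case~\eqref{eq:f-m}, so $f(k,\tau)=k\tau$; as $k$ is the new minimum and sits in front, rule~\eqref{eq:aix-b} gives $\aix\,k\tau=1+\aix\,\tau=1$. If $k>m$, then since $\alpha\ne\emptyset$ rule~\eqref{eq:f-b} cannot apply. When moreover $\beta=\emptyset$ we are in case~\eqref{eq:f-a}, so $f(k,\tau)=km\alpha$; here $m$ follows the single letter $k$ and precedes the nonempty block $\alpha$, so~\eqref{eq:aix-ab} gives $\aix\,km\alpha=\aix\,k=1$.

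The remaining and main case is~\eqref{eq:f-ab}, where $\beta\ne\emptyset$ and $f(k,\tau)=f(k,\alpha)m\beta$; this is the step I expect to be the crux. The point is to show that evaluating $\aix$ of $f(k,\alpha)m\beta$ collapses to $\aix\,f(k,\alpha)$. For this I would invoke Lemma~\ref{lem:ini}: $f(k,\alpha)$ begins with $k$ and is a rearrangement of $\alpha\cup\{k\}$, all of whose letters exceed $m$. Consequently $m$ is still the global minimum of the whole word $f(k,\alpha)m\beta$, flanked by the nonempty prefix $f(k,\alpha)$ and the nonempty suffix $\beta$, so rule~\eqref{eq:aix-ab} applies and gives $\aix\,f(k,\alpha)m\beta=\aix\,f(k,\alpha)$. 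Since here $\alpha,\beta\ne\emptyset$, rule~\eqref{eq:aix-ab} also gives $\aix\,\alpha=\aix\,\tau=0$, and $|\alpha|<|\tau|$, so the inductive hypothesis yields $\aix\,f(k,\alpha)=1$, closing the induction.

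Finally I would obtain the descent equality from Lemma~\ref{lem:same-des}. Since $\aix\,\tau=0$, the computed value satisfies $\aix\,f(k,\tau)=1=\aix\,\tau+1>0$, which is exactly the condition that the first equivalence of Lemma~\ref{lem:same-des} pairs with $\des\,f(k,\tau)=\des\,\tau$; this gives the remaining assertion.
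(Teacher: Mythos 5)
Your proof is correct, but it takes a genuinely different route from the paper's. The paper does not induct on $|\tau|$: it first converts the hypothesis $\aix\,\tau=0$ into an explicit normal form $\tau=\alpha m_0m_1\beta_1\dots m_r\beta_r$ with $\alpha\ne\emptyset$, $\alpha>m_0$, $m_0>m_1>\dots>m_r$, and $\beta_i\ne\emptyset$, $\beta_i>m_i$ for all $i$, and then computes $f(k,\tau)$ in closed form in two cases: for $k>m_0$ one gets $f(k,\tau)=km_0\alpha m_1\beta_1\dots m_r\beta_r$ (repeated \eqref{eq:f-ab} followed by \eqref{eq:f-a}), from which $\aix\,f(k,\tau)=\aix\,k=1$ and $\des\,f(k,\tau)=\des\,\tau$ (identical descent bottoms) are both read off directly, while for $k<m_0$ one gets $f(k,\tau)=k\tau$, from which the descent equality is immediate and the $\aix$ value is then deduced from Lemma~\ref{lem:same-des} in the direction from descents to $\aix$. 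You instead induct on $|\tau|$ following the recursion for $f$ case by case --- the only genuinely recursive case being \eqref{eq:f-ab}, where your appeal to Lemma~\ref{lem:ini} is exactly what is needed: $f(k,\alpha)$ consists of letters exceeding $m$, so $m$ remains the minimum flanked by the nonempty blocks $f(k,\alpha)$ and $\beta$, and \eqref{eq:aix-ab} collapses $\aix\,f(k,\alpha)m\beta$ to $\aix\,f(k,\alpha)$, where the inductive hypothesis applies since $\aix\,\alpha=\aix\,\tau=0$ --- and you then obtain the descent statement uniformly from Lemma~\ref{lem:same-des} in the opposite direction, from $\aix$ to descents. Both arguments are sound and of comparable length; yours is more modular, needing no global description of the strings with $\aix=0$ and delegating all descent bookkeeping to Lemma~\ref{lem:same-des}, while the paper's normal form costs more up front but exhibits $f(k,\tau)$ explicitly so that both conclusions are visible at once. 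Two details you included are worth keeping in any final write-up: the precedence convention $\aix\,m=1$ for a single letter (rule \eqref{eq:aix-b} over \eqref{eq:aix-a}), which is indeed the convention the paper relies on in proving Lemma~\ref{lem:same-des}, and the observation that \eqref{eq:f-b} is excluded because $\aix\,\tau=0$ forces $\alpha\ne\emptyset$, which is what makes your case analysis exhaustive.
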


\begin{proof}
The lemma is obviously true for $\tau=\emptyset$. Suppose $\tau\ne\emptyset$. Since $\aix\,\tau=0$, it follows that $\tau=\alpha m_0 m_1\beta_1\dots m_r\beta_r$, where $\alpha\ne\emptyset$, $\alpha>m_0$, $\beta_i\ne\emptyset$ and $m_i<\beta_i$ for all $i=1,\dots,r$, and $m_0>m_1>\dots>m_n$. If $k>m_0$, then applying \eqref{eq:f-ab} repeatedly followed by \eqref{eq:f-a}, we obtain
\[
f(k,\tau)=f(k,\alpha m_0 m_1\beta_1\dots m_r\beta_r)=f(k,\alpha m_0)m_1\beta_1\dots m_r\beta_r=km_0\alpha m_1\beta_1\dots m_r\beta_r
\]
so that $\aix\,f(k,\tau)=\aix\,km_0\alpha=\aix\,k=1$. Also, all descent bottoms of $\tau$ and $f(k,\tau)$ are the same (including $m_0$), so $\des\,f(k,\tau)=\des\,\tau$.

Suppose that $k<m_0$, and let $j$ be the maximal such that $k<m_j$. Then $k<\alpha m_0m_1\beta_1\dots m_j\beta_j$, so
\[
\begin{split}
f(k,\tau)&=f(k,\alpha m_0 m_1\beta_1\dots m_r\beta_r)\\
&=f(k,\alpha m_0 m_1\beta_1\dots m_j\beta_j)m_{j+1}\beta_{j+1}\dots m_r\beta_r\\
&=k\alpha m_0 m_1\beta_1\dots m_j\beta_j m_{j+1}\beta_{j+1}\dots m_r\beta_r\\
&=k\tau.
\end{split}
\]
Therefore, $f(k,\tau)=k\tau$ starts with an ascent, so $\des\,f(k,\tau)=\des\,k\tau=\des\,\tau$ and hence $\aix\,f(k,\tau)=1+\aix\,\tau=1$ by Lemma \ref{lem:same-des}.
\end{proof}

\begin{lemma} \label{lem:delete-second}
Suppose that $\aix\,f(k,\tau)=0$ and $\tau=f(l,\sigma)$ for some letter $l$ and string $\sigma$. Then $\des\,f(k,\tau)=1+\des\,f(k,\sigma)$.
\end{lemma}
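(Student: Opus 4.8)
The plan is to induct on the length of $\tau$ (equivalently of $\sigma$, as $|\tau|=|\sigma|+1$), analyzing the first rule applied in the computation of $f(k,\tau)$ and matching it against the structure imposed on $\tau$ by the identity $\tau=f(l,\sigma)$. First I would record two reductions. Since $\aix\,f(k,\tau)=0$, rule \eqref{eq:f-m} cannot be the first rule applied (it would place the least letter first and force $\aix\ge 1$), so $k>\min\tau=:m$. Moreover, by Lemma \ref{lem:ini} the string $\tau=f(l,\sigma)$ begins with $l$, which locates $m$ inside $\tau$ according to whether $l=m$ or $l>m$; these are the two cases I would treat.

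In the case $l=m$ (that is, $l<\sigma$) we have $\tau=f(l,\sigma)=l\sigma$ by \eqref{eq:f-m}, so the least letter $m=l$ is the first letter of $\tau$ and the computation of $f(k,\tau)$ opens with \eqref{eq:f-b} and $\beta=\sigma$, giving $f(k,\tau)=f(k,\sigma)\,l$. As $l$ is the least letter appended at the very end, it produces exactly one extra descent, so $\des\,f(k,\tau)=\des\,f(k,\sigma)+1$ immediately, with no appeal to the inductive hypothesis.

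In the case $l>m$ (so $m=\min\sigma$), the least letter $m$ is interior to $\tau=\alpha m\beta$, and $\alpha\ne\emptyset$ since it carries the leading letter $l$. Were $\beta=\emptyset$, rule \eqref{eq:f-a} would give $\aix\,f(k,\tau)=1$, against the hypothesis; hence $\beta\ne\emptyset$, rule \eqref{eq:f-ab} applies, $f(k,\tau)=f(k,\alpha)\,m\,\beta$, and $\aix\,f(k,\alpha)=0$. I would then read $\alpha$ off the rule forming $\tau=f(l,\sigma)$ after writing $\sigma=\alpha' m\beta'$. The possibility $\alpha'=\emptyset$ is excluded, since it would make $m$ the last letter of $\tau$ and so force $\beta=\emptyset$; thus $\alpha'\ne\emptyset$ and two sub-cases remain. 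If $\beta'\ne\emptyset$, then \eqref{eq:f-ab} yields $\alpha=f(l,\alpha')$ with $|\alpha'|<|\sigma|$, so the inductive hypothesis (valid because $\aix\,f(k,\alpha)=0$) gives $\des\,f(k,\alpha)=1+\des\,f(k,\alpha')$; substituting this into the case-\eqref{eq:f-ab} descent relation from the discussion before Lemma \ref{lem:same-des}, applied to each of $f(k,\tau)=f(k,\alpha)m\beta$, $f(k,\sigma)=f(k,\alpha')m\beta'$ and $\tau=f(l,\sigma)=f(l,\alpha')m\beta'$, and using $\des\,\alpha=\des\,f(l,\alpha')$, collapses the bookkeeping to $\des\,f(k,\tau)=1+\des\,f(k,\sigma)$. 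If instead $\beta'=\emptyset$, then $\sigma=\alpha' m$ and \eqref{eq:f-a} gives $\tau=lm\alpha'$, so $\alpha=l$; now $\aix\,f(k,l)=0$ forces $k>l$, and the explicit strings $f(k,\tau)=klm\alpha'$ and $f(k,\sigma)=km\alpha'$ have $\des=2+\des\,\alpha'$ and $\des=1+\des\,\alpha'$ respectively, again giving the claim.

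The hard part is the bookkeeping in the case $l>m$: I must translate the two hypotheses into the exact structural form of $\alpha,\beta,\alpha',\beta'$, eliminate the degenerate configurations the hypotheses actually forbid (namely $\alpha'=\emptyset$, and $k<l$ in the last sub-case), and then align the three descent-difference identities so that the $+1$ produced by induction is preserved rather than absorbed. The remaining descent counts are routine, of the same kind already carried out before Lemma \ref{lem:same-des}.
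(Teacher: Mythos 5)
Your proof is correct, but it takes a genuinely different route from the paper's. You prove the lemma by strong induction on $|\tau|$, peeling off one layer of the recursion: after ruling out \eqref{eq:f-m} and the configuration $\beta=\emptyset$, you match the first rule applied to $f(k,\tau)$ against the last rule forming $\tau=f(l,\sigma)$, handle the boundary configurations ($l=\min\tau$, and $\beta'=\emptyset$ with its forced $k>l$) by explicit descent counts, and in the main sub-case reduce to the inductive hypothesis applied to $\alpha=f(l,\alpha')$, using the case-\eqref{eq:f-ab} descent relation to transport the $+1$; I checked the bookkeeping and it closes correctly, since all four strings decompose as (prefix)$\,m\,\beta'$ with the junctions contributing identically. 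The paper instead argues non-inductively: it invokes Lemma \ref{lem:no-2-0s-in-a-row} to get $\aix\,\tau>0$, then Lemma \ref{lem:same-des} to conclude that the computation of $f(l,\sigma)$ uses no application of \eqref{eq:f-b}, which lets it write $\sigma$ globally in one of two explicit forms (ending in rule \eqref{eq:f-m} or \eqref{eq:f-a}) and compute $\tau$, $f(k,\tau)$, $f(k,\sigma)$ in closed form, comparing descents directly. What your approach buys is self-containedness: you use only Lemma \ref{lem:ini} and the pre-Lemma-\ref{lem:same-des} descent relations, avoid the two auxiliary lemmas entirely, and handle degenerate cases (such as $\sigma=\emptyset$ or an empty trailing block) uniformly, whereas the paper's Case I wording tacitly assumes a nonempty suffix $\beta$ with first letter $l_1$. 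What the paper's approach buys is an explicit structural picture of $\tau$ and $\sigma$ (the full left-to-right-minima decomposition), which makes the single $+1$ visible at a glance as the replacement of one descent $al_1$ by the two descents $al$ and $ll_1$.
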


\begin{proof}
Note that $\aix\,\tau\ge 1$ since otherwise $\aix\,f(k,\tau)=1$. In particular, $\tau\ne\emptyset$, so indeed there is a letter $l$ and a string $\sigma$ such that $\tau=f(l,\sigma)$.

Since $\tau=f(l,\sigma)$, it follows that $\tau$ starts with $l$. Let $l=m_0>m_1>\dots>m_r$ be the left-to-right minima of $\tau$. Then $\tau=m_0\tau_0 m_1\tau_1\dots m_r\tau_r$ with $\tau_i>m_i$ for all $i=0,1\dots,r$. We also have that $\tau_i\ne\emptyset$ for $i\ge 1$ since otherwise $\aix\,\tau=0$. Therefore,
\[
f(k,\tau)=f(k,m_0\tau_0)m_1\tau_1\dots m_r\tau_r=f(k,l\tau_0)m_1\tau_1\dots m_r\tau_r,
\]
so $\aix\,f(k,\tau)=\aix,f(k,l\tau_0)$. If $k<l$, then $k<l\tau_0$, so $f(k,l\tau_0)=kl\tau_0$ and
\[
\aix\,f(k,l\tau_0)=1+\aix\,l\tau_0=1+\aix\,\tau>0,
\]
which contradicts our assumption. Therefore, $k>l$.

Since $\aix\,f(l,\sigma)=\aix\,\tau>0$, it follows that the recursive computation of $f(l,\sigma)$ involves no application of \eqref{eq:f-b}. Thus, we have two cases:
\begin{itemize}
\item $\sigma=\alpha l_1\beta_1\dots l_s\beta_s$, where $l>l_1>\dots>l_s$, $\alpha\ne\emptyset$, $\alpha>l$, $\beta_i\ne\emptyset$ and $\beta_i>l_i$ for $i=1,\dots,s$.

\item $\sigma=\alpha l_0l_1\beta_1\dots l_s\beta_s$, where $l>l_0>l_1>\dots>l_s$, $\alpha\ne\emptyset$, $\alpha>l$, $\beta_i\ne\emptyset$ and $\beta_i>l_i$ for $i=1,\dots,s$.

\end{itemize}

Let $\beta=l_1\beta_1\dots l_s\beta_s$. In the first case, we have
\[
\begin{split}
\tau=f(l,\sigma)&=f(l,\alpha\beta)=f(l,\alpha)\beta=l\alpha\beta=l\sigma\\
f(k,\tau)&=f(k,l\alpha\beta)=f(k,l\alpha)\beta=f(k,\alpha)l\beta\\
f(k,\sigma)&=f(k,\alpha\beta)=f(k,\alpha)\beta.
\end{split}
\]
Note that $\ini\,\beta=l_1<l$. Also note that $f(k,\alpha)l\beta=k\hat\alpha l\beta$ for some permutation $\hat\alpha$ of $\alpha$. Since $\alpha>l$, it follows that $\hat\alpha>l$. Let $a$ be the last letter of $f(k,\alpha)$. Then the descents of $f(k,\alpha)l\beta$ are obtained from the descents of $f(k,\alpha)\beta$ by replacing the descent $al_1$ with the descents $al$ and $ll_1$. Therefore, we have $\des\,f(k,\tau)=\des\,f(k,\sigma)+1$ as desired.

In the second case, we have
\[
\begin{split}
\tau=f(l,\sigma)&=f(l,\alpha l_0\beta)=f(l,\alpha l_0)\beta=ll_0\alpha\beta\\
f(k,\tau)&=f(k,ll_0\alpha\beta)=f(k,ll_0\alpha)\beta=f(k,l)l_0\alpha\beta=kll_0\alpha\beta\\
f(k,\sigma)&=f(k,\alpha l_0\beta)=f(k,\alpha l_0)\beta=kl_0\alpha\beta.
\end{split}
\]
Since $k>l>l_0$, it is easy to see that $\des\,f(k,\tau)=\des\,f(k,\sigma)+1$. This ends the proof.
\end{proof}

\begin{lemma} \label{lem:des-lec}
$(\aix,\des)\,\phi(\pi)=(\pix,\lec)\,\pi$.
\end{lemma}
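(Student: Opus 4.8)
The plan is to prove the slightly more general statement that $(\aix,\des)\,\phi(\rho)=(\pix,\lec)\,\rho$ for every word $\rho$ of distinct letters, by induction on $|\rho|$. Since $\phi(c\rho)=f(c,\phi(\rho))$ when a letter $c$ is prepended to $\rho$, the inductive step reduces to a single transfer statement: writing $\tau=\phi(\rho)$, I must show $(\aix,\des)\,f(c,\tau)=(\pix,\lec)(c\rho)$ granted the inductive hypothesis $(\aix,\des)\,\sigma=(\pix,\lec)\,(\cdot)$ for all shorter words. Two facts feed the step: by Lemma~\ref{lem:ini} applied to the word $\rho$ we have $\ini\,\tau=\ini\,\rho=\rho(1)$, and $\tau$, $\rho$ share the same letters, so $\min\tau=\min\rho$ and $\aix\,\tau=\pix\,\rho$.

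First I would record how $(\pix,\lec)$ changes when $c$ is prepended to $\rho$, reading this off the hook factorization. If $c<\rho(1)$, then $c$ lengthens the initial nondecreasing run and leaves all hooks untouched, so $\pix(c\rho)=\pix\,\rho+1$ and $\lec(c\rho)=\lec\,\rho$. If $c>\rho(1)$ and $\pix\,\rho=0$, then $c$ together with the first descent of $\rho$ creates two consecutive descents, whence $\pix(c\rho)=1$ and again $\lec(c\rho)=\lec\,\rho$. Finally, if $c>\rho(1)$ and $\pix\,\rho\ge 1$, then $c$ and the nondecreasing prefix $\rho_0$ form a new leading hook $c\rho_0$, so $\pix(c\rho)=0$ and $\lec(c\rho)=\lec\,\rho+|(\rho_0)_{<c}|$. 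Thus $\lec$ stays fixed and $\pix$ rises by $1$ in the first two cases, while $\lec$ strictly increases and $\pix$ drops to $0$ in the third, a dichotomy that exactly mirrors Lemma~\ref{lem:same-des}. I would also isolate the bookkeeping identity that, in the third case, deleting the first letter $\rho(1)$ (which lies in $c\rho_0$ and contributes the single inversion $(c,\rho(1))$) lowers $\lec$ by exactly $1$, i.e. $\lec(c\rho)=1+\lec(c\rho^{-})$ where $\rho^{-}=\rho(2)\cdots\rho(|\rho|)$.

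The heart of the argument is matching the regime on the $\tau$-side to that on the $\rho$-side, i.e. proving $\aix\,f(c,\tau)=\pix(c\rho)$. The key observation is that in rule \eqref{eq:f-ab} the recursion passes from $\tau=\alpha m\beta$ to the prefix $\alpha$, which has the same first letter $\ini\,\alpha=\ini\,\tau$, while every prefix of $\tau$ starting at position $1$ has minimum at most $\ini\,\tau$. Hence, so long as $c>\ini\,\tau$, the computation of $f(c,\tau)$ descends through \eqref{eq:f-ab} in lockstep with the recursion \eqref{eq:aix-ab} for $\aix\,\tau$—never triggering \eqref{eq:f-m}—and terminates through \eqref{eq:f-b} exactly when the $\aix$-recursion terminates through \eqref{eq:aix-b}, that is, exactly when $\aix\,\tau\ge 1$; symmetrically, if $c<\ini\,\tau$ the descent preserves $\ini\,\tau>c$ and can only end in \eqref{eq:f-a} or \eqref{eq:f-m}. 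Combined with Lemma~\ref{lem:same-des}, and with Lemma~\ref{lem:no-2-0s-in-a-row} to settle the case $\aix\,\tau=0$, this shows $\des\,f(c,\tau)>\des\,\tau$ precisely when $c>\ini\,\tau$ and $\aix\,\tau\ge 1$. Translating through $\ini\,\tau=\rho(1)$ and $\aix\,\tau=\pix\,\rho$, this is exactly the third prepend case, so the two dichotomies coincide and Lemma~\ref{lem:same-des} forces $\aix\,f(c,\tau)=\pix(c\rho)$ in every case.

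It remains to match the magnitudes of $\des$ and $\lec$, which I expect to be the only real obstacle. In the two $\des$-preserving regimes there is nothing to do, since $\des\,f(c,\tau)=\des\,\tau=\lec\,\rho=\lec(c\rho)$. In the remaining regime $\aix\,f(c,\tau)=0$, so Lemma~\ref{lem:delete-second} applies with the canonical decomposition $\tau=f(\rho(1),\phi(\rho^{-}))$ and gives $\des\,f(c,\tau)=1+\des\,f(c,\phi(\rho^{-}))$. Because $f(c,\phi(\rho^{-}))=\phi(c\rho^{-})$ with $|c\rho^{-}|<|c\rho|$, the inductive hypothesis yields $\des\,f(c,\phi(\rho^{-}))=\lec(c\rho^{-})$, and the bookkeeping identity $\lec(c\rho)=1+\lec(c\rho^{-})$ then closes the step. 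The main difficulty throughout lies in the regime-matching paragraph: reconciling the minimum-driven case split of $f$ with the first-letter-driven case split of the hook factorization, for which the invariance of $\ini$ under \eqref{eq:f-ab} is the essential lever.
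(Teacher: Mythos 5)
Your proposal is correct and takes essentially the same route as the paper: induction on length, transferring the three structural facts (Lemmas \ref{lem:same-des}, \ref{lem:no-2-0s-in-a-row}, \ref{lem:delete-second}) to the pair $(\pix,\lec)$ under literal prepending $g(k,\tau)=k\tau$ --- your three prepend cases and your bookkeeping identity $\lec(c\rho)=1+\lec(c\rho^{-})$ are exactly the $g$-analogues the paper invokes, and your final step is exactly the paper's application of Lemma \ref{lem:delete-second} plus the inductive hypothesis. The only difference is one of completeness: the paper compresses everything after ``define $g$'' into ``this implies the lemma,'' whereas you explicitly supply the regime-matching argument (the lockstep of the $f$-recursion with the $\aix$-recursion, keyed to $\ini\,\phi(\rho)=\ini\,\rho$ from Lemma \ref{lem:ini}), which is genuinely needed --- the three lemmas alone do not force the two sides into the same case --- so you have filled the gap the paper leaves to the reader.
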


\begin{proof}
The proof is by induction on the length of $\pi$. The result is obviously true for $\pi=\emptyset$. Define $g(k,\tau)=k\tau$ for a string $\tau$ of distinct elements and an element $k$ not in the alphabet of $\tau$. Then it is easy to see that the results of Lemmas \ref{lem:same-des}, \ref{lem:no-2-0s-in-a-row} and \ref{lem:delete-second} hold if we replace $f$ with $g$, $\aix$ with $\pix$, and $\des$ with $\lec$. This implies the lemma and thus finishes the proof of Theorem \ref{thm:aid-des-inv-lec}.
\end{proof}

\begin{remark}
We note that a statistic $\mathtt{rix}$ similar to $\aix$ (up to an easy transformation) has been independently defined by Z. Lin \cite{L1}.
\end{remark}

It would be interesting to construct a direct bijection on permutations that maps $(\aix,\des,\aid)$ to $(\fix,\exc,\maj)$.

\begin{remark}
\emph{Rawlings major index} $r\maj$ is a Mahonian statistics that interpolates between $\maj$ and $\inv$, and is defined as follows:
\[
\begin{split}
\Des_r(\pi)&=\{i\in\Des(\pi)\ :\ \pi(i)-\pi(i+1)\ge r\}\\
\Inv_r(\pi)&=\{(i,j)\in\Inv(\pi)\ :\ \pi(i)-\pi(j)<r\}\\
r\maj(\pi)&=\sum_{i\in\Des_r(\pi)}{i}+|\Inv_r(\pi)|
\end{split}
\]
Note that on $\S_n$, $1\maj=\maj$, $n\maj=\inv$, and $|\Inv_2(\pi)|=\ides(\pi)=\des(\pi^{-1})$. It is known \cite{W} that $(\ides,2\maj)\sim(\exc,\maj)$. It would be interesting to find a $\fix$-partner $2\fix$ for $(\ides,2\maj)$ so that $(2\fix,\ides,2\maj)\sim(\fix,\exc,\maj)$. Continuing in the same vein, for $3\le r\le n-1$, it would be interesting to find the interpolating statistics $r\fix$ and $r\exc$ so that $(\fix,\exc,\maj)\sim(r\fix,r\exc,r\maj)\sim(\pix,\lec,\inv)$.
\end{remark}

\section{$(\das,\mix)\sim(\des,\inv)$} \label{sec:des-inv-das-mix}

A Mahonian statistic $\mix$ counting some inversions and some noninversions has been defined by P. Br\"and\'en, A. Claesson \cite{BC}. Even though it was originally defined using \emph{mesh patterns}, it may be easily defined without using those. The statistic $\mix$ counts pairs defined on a permutation $\pi$ as follows:
\begin{itemize}

\item inversions $(\pi(i),\pi(j))$ such that $\pi(i)$ is a left-to-right maximum of $\pi$, and

\item non-inversions $(\pi(i),\pi(j))$ such that there is a (left-to-right-maximum) $\pi(k)$ with $k<i$ and $\pi(k)>\pi(j)$.

\end{itemize}

We note that, in fact, our definition of $\mix$ is the reversal of the $\mix$ as originally defined in \cite{BC}. However, we think our definition is preferable, since we have $\mix(id_n)=0$, rather than $\mix(id_n)=n-1$ under the original definition.

There is also a direct bijection given in \cite{BC} that takes $\inv$ to $\mix$. Making the necessary minor changes to account for the difference in definitions mentioned above, we describe it as follows.

Let $M,I\in[n]$ be such that $|M|=|I|$ and $n\in M$, $1\in I$, and let $\S_n(M,I)$ be the set of permutations in $\S_n$ that have left-to-right maxima exactly at the positions indexed by $I$, and set of values of the left-to-right maxima equal to $M$.

Let $M=\{m_1<\dots<m_k\}$, and let $B_i$ be the set of entries of $\pi$ that are smaller than and to the right of $m_i$. Also, for $S\subseteq[n]$, let  $\psi_S(\pi)$ be the result of reversing the subword of $\pi$ that is a permutation on $S$. Then define
\[
\psi=\psi_{B_1}\circ\psi_{B_2\cap B_1}\circ\dots\circ\psi_{B_{k-1}}\circ\psi_{B_k\cap B_{k-1}}\circ \psi_{B_k}.
\]
Then we have \cite{BC} that $\psi$ is an involution and $\mix\,\psi(\pi)=\inv\,\pi$ (and vice versa).

We observe that there is a natural Eulerian partner $\das$ (a mix of descents and ascents) for $\mix$ such that is a $(\das,\mix)\sim(\des,\inv)$. Let $\das\,\pi$ be the number of positions $i\in[n-1]$ of $\pi$ such that
\begin{itemize}

\item $\pi(i)\pi(i+1)$ is a descent, and $\pi(i)$ is a left-to-right maximum of $\pi$, or

\item $\pi(i)\pi(i+1)$ is an ascent, and there is a (left-to-right-maximum) $\pi(k)$ with $k<i$ and $\pi(k)>\pi(i+1)$.

\end{itemize}

\begin{theorem}
$(\das,\mix)\,\psi(\pi)=(\des,\inv)\,\pi$.
\end{theorem}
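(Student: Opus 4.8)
The plan is to lean on the given fact from \cite{BC} that $\psi$ is an involution with $\mix\,\psi(\pi)=\inv\,\pi$; this settles the Mahonian coordinate outright, so the entire content is the Eulerian identity $\das\,\psi(\pi)=\des\,\pi$. The first step I would record is that $\psi$ preserves the left-to-right maxima \emph{exactly}: every factor $\psi_S$ in the composition reverses a subword supported on a set $S\subseteq B_i$ of values that are all smaller than $m_i$ and occur to the right of $m_i$, so such a reversal can neither create, destroy, nor move any left-to-right maximum. Hence every intermediate permutation, and in particular $\sigma:=\psi(\pi)$, lies in the same class $\S_n(M,I)$ as $\pi$; throughout, $M$, $I$, and $k=|M|$ are fixed.

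Next I would reformulate $\das$ intrinsically. Writing $R_i=\max\{\rho(1),\dots,\rho(i)\}$, I claim that for $\rho\in\S_n(M,I)$ a position $i\in[n-1]$ has $R_i>\rho(i+1)$ precisely when $\rho(i+1)$ is a non-maximum, and that this set of positions splits into three kinds: descents with maximal top (clause A of $\das$), non-maximal ascents dominated by an earlier maximum (clause B of $\das$), and descents with non-maximal top, which are exactly the positions left \emph{uncounted} by $\das$. Letting $N(\rho)$ denote the number of these non-maximal-top descents, this yields $\das\,\rho=(n-k)-N(\rho)$. On the other hand $\des\,\rho=D_{\max}+N(\rho)$, where $D_{\max}=|\{i\in I:\ i<n,\ i+1\notin I\}|$ counts the descents with maximal top and depends only on $I$. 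Adding, I obtain the complementation identity
\[
\das\,\rho+\des\,\rho=(n-k)+D_{\max}\qquad(\rho\in\S_n(M,I)),
\]
whose right-hand side is constant on the class $\S_n(M,I)$.

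Applying this identity to both $\pi$ and $\sigma=\psi(\pi)$ (legitimate by the first step), the target $\das\,\sigma=\des\,\pi$ becomes equivalent to $\des\,\sigma=\das\,\pi$, that is, to the single Eulerian identity
\[
N(\pi)+N(\psi(\pi))=(n-k)-D_{\max},
\]
asserting that the non-maximal descents of $\pi$ and of $\psi(\pi)$ are complementary within the class. This is the heart of the theorem and the step I expect to be the main obstacle. I would attack it by induction on $k$, peeling off the largest maximum $m_k=n$ and analyzing the innermost reversals $\psi_{B_k}$ and $\psi_{B_k\cap B_{k-1}}$. The delicate point, already visible in small examples, is that $\psi$ does \emph{not} merely reverse each maximal run of non-maximal positions: the intersection factors $\psi_{B_i\cap B_{i-1}}$ transport non-maximal values across the intervening maxima, so the naive accounting ``a reversal turns descents into ascents'' fails and the cross-block interaction must be tracked explicitly. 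An alternative I would keep in reserve is to rerun \cite{BC}'s proof of $\mix\,\psi(\pi)=\inv\,\pi$ while recording, for each paired inversion, whether it is adjacent, and to verify that $\psi$ matches adjacent inversions of $\pi$ with adjacent $\mix$-pairs of $\sigma$ in aggregate — though, as examples show, not position by position.
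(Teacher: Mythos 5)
Your preliminary reductions are correct and worth recording: each factor $\psi_S$ permutes values smaller than $m_i$ among positions lying to the right of $m_i$, so $\psi$ fixes the left-to-right maxima (both values and positions) and hence preserves the class $\S_n(M,I)$; and on that class the complementation identity $\das\,\rho+\des\,\rho=(n-k)+D_{\max}$ does hold, since the $n-k$ positions $i$ with $\rho(i+1)$ a non-maximum split exactly as you say into maximal-top descents ($D_{\max}$ of them, determined by $I$ alone), non-maximal-top descents ($N(\rho)$, counted by $\des$ but not by $\das$), and dominated ascents (counted by $\das$ but not by $\des$). However, everything up to that point is a chain of \emph{equivalences}, not progress toward truth: you have shown that the theorem holds if and only if $N(\pi)+N(\psi(\pi))=(n-k)-D_{\max}$, and that identity --- which, as you say yourself, is ``the heart of the theorem'' --- is never proved. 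You name two strategies (induction on $k$, or re-running the Br\"and\'en--Claesson argument while tracking adjacency) and explicitly defer both, even flagging the first as likely to fail in its naive form because the intersection factors $\psi_{B_i\cap B_{i-1}}$ move values across the intervening maxima. That is a genuine gap: as submitted, the proposal reformulates the statement rather than proving it.

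For what it is worth, the paper's own proof is precisely the first strategy you decline to execute: it asserts that an induction on $k$, following step for step the proof of Theorem 10 in \cite{BC} (the proof there of $\mix\,\psi(\pi)=\inv\,\pi$) while carrying the descent-type statistic along through the reversals $\psi_{B_k}$ and $\psi_{B_k\cap B_{k-1}}$, establishes the claim, and it leaves the details as an exercise. So your plan points in the same direction as the paper, and your complementation identity would even be a sensible organizing device for that induction, since it compresses all the bookkeeping into the single quantity $N$. But to have a proof you must actually carry out the induction on $k$: establish the base case and show how $N$ (equivalently $\das$) transforms when the innermost reversals are applied and the largest maximum is peeled off, exactly the cross-block analysis you identify as the obstacle and then leave untouched.
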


\begin{proof}
The proof is easily constructed by induction on $k$, following along the lines of the proof of Theorem 10 in \cite{BC}. In fact, our extension of that proof is so routine that we leave it as an exercise for the reader.
\end{proof}

\begin{remark}
We also note that a restriction of the map $\psi$ yields Krattenthaler's bijection \cite{K1} between 321-avoiding and 312-avoiding permutations on $\S_n$ using Dyck paths (modified up to the suitable reversal and complementation symmetries). 
\end{remark}

\end{document}